\newcommand{\del}{\partial}
\newcommand{\bC}{{\mathbb C}}
\newcommand{\bH}{{\mathbb H}}
\newcommand{\bK}{{\mathbb K}}
\newcommand{\bL}{{\mathbb L}}
\newcommand{\bP}{{\mathbb P}}
\newcommand{\bQ}{{\mathbb Q}}
\newcommand{\bZ}{{\mathbb Z}}
\newcommand{\cA}{{\mathcal A}}
\newcommand{\cC}{{\mathcal C}}
\newcommand{\cF}{{\mathcal F}}
\newcommand{\cO}{{\mathcal O}}
\newcommand{\cP}{{\mathcal P}}
\newcommand{\cR}{{\mathcal R}}
\newcommand{\cZ}{{\mathcal Z}}
\newcommand{\tC}{\widetilde{C}}
\newcommand{\tX}{\widetilde{X}}
\newcommand{\ra}{\rightarrow}
\newcommand{\lra}{\longrightarrow}
\newcommand{\inj}{\hookrightarrow}
\newcommand{\surj}{-\hspace{-4pt}\rightarrow\hspace{-19pt}{\rightarrow}\hspace{3pt}}
\newcommand{\T}{\Theta}
\newcommand{\im}{\operatorname{im}}
\newcommand{\coker}{\operatorname{coker}}
\newcommand{\Pic}{\operatorname{Pic}}
\theoremstyle{definition}
\newtheorem{proposition}{Proposition}[section]
\newtheorem{lemma}[proposition]{Lemma}
\newtheorem{theorem}[proposition]{Theorem}
\newtheorem{theoremi}{Theorem}
\newtheorem{conjecture}[theoremi]{Conjecture}
\newtheorem{definition}[proposition]{Definition}
\newtheorem{corollary}[proposition]{Corollary}
\numberwithin{equation}{section}
\begin{document}

\title{The primitive cohomology of theta divisors}

\author{E. Izadi}

\address{Department of Mathematics, University of California San Diego, 9500 Gilman Drive \# 0112, La Jolla, CA 92093-0112, USA}

\email{eizadi@math.ucsd.edu}

\author{J. Wang}

\address{Department of Mathematics, Boyd
Graduate Studies Research Center, University of Georgia, Athens, GA
30602-7403, USA}
\email{jiewang@math.uga.edu}

\dedicatory{Dedicated to Herb Clemens}

\thanks{The first author was partially supported by the National
Science Foundation. Any opinions, findings
and conclusions or recommendations expressed in this material are those
of the authors and do not necessarily reflect the views of the
NSF}

\subjclass[2010]{Primary 14C30 ; Secondary 14D06, 14K12, 14H40}

\begin{abstract}

  The primitive cohomology of the theta divisor of a principally
  polarized abelian variety of dimension $g$ is a Hodge structure of
  level $g-3$. The Hodge conjecture predicts that it is contained in
  the image, under the Abel-Jacobi map, of the cohomology of a family
  of curves in the theta divisor. We survey some of the results known about this primitive cohomology, prove a few general facts and mention some interesting open problems.

\end{abstract}

\maketitle

\tableofcontents

\section*{Introduction}

Let $A$ be an abelian variety of dimension $g$ and let $\T\subset A$ be a theta divisor. In other words, $\T$ is an ample divisor such that $h^0 (A, \T) =1$. We call the pair $(A, \T)$ a principally polarized abelian variety or ppav, with $\T$ uniquely determined up to translation. In this paper we assume $\T$ is smooth.

The primitive cohomology $\bK$ of $\T$ can be defined as the kernel of Gysin push-forward $H^{g-1} (\T, \bZ) \ra H^{g+1} (A, \bZ)$ (see Section 1 below). We shall see that $\bK$ inherits an integral Hodge structure of level $g-3$ from the cohomology of $\T$. 

Recall that the level of an integral or rational Hodge structure $\bH$ is defined to be
\[
l(\bH) := \max \{ |p-q| : \bH^{p,q} \neq 0 \}
\]
where $\bH^{p,q}$ is the $(p,q)$ component of the Hodge decomposition of $\bH_{\bC}$. Alternatively, we define the coniveau of $\bH$ to be 
$$\gamma(\bH):=\min\{q:\bH^{p,q} \neq 0\}.$$
Thus 
\[
l(\bH)+2\gamma(\bH)=\text{weight of }\bH.
\]

We will always designate a Hodge structure by its lattice or rational
vector space $\bH$, the splitting $\bH_{\bC} := \bH\otimes\bC =\oplus_{ p+q =m} \bH^{p,q}$
being implicit.

The general Hodge conjecture says that $\bK_{\bQ}$ is contained in the image of the cohomology of a proper algebraic subset of $\T$. More precisely, let $X$ be a smooth projective algebraic variety and $m$ and $p$ two positive integers with $2p \leq m$. Grothendieck's version of the general Hodge conjecture \cite{grothendieck69} can be stated as

\begin{conjecture}
{\bf GHC(X,m,p):} For every rational sub-Hodge structure $V$ of $H^m
(X,\bQ )$ with level $\leq m- 2p$, there exists a closed algebraic subset $Z$ of
$X$ of pure codimension $p$ such that $V\subset Ker\{ H^m (X,\bQ )\ra H^m (
X\setminus Z,\bQ )\}$.
\end{conjecture}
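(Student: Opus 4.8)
The plan is to translate the cohomological vanishing condition in GHC$(X,m,p)$ into a statement about Gysin push-forwards, and then to reduce the theta-divisor case to producing an explicit geometric cycle. First I would apply the long exact sequence of the pair $(X,X\setminus Z)$, which identifies $\ker\{H^m(X,\bQ)\ra H^m(X\setminus Z,\bQ)\}$ with the image of the cohomology of $X$ with supports in $Z$. Choosing a resolution $\widetilde{Z}\ra Z$ and invoking purity, this image is exactly the image of the Gysin map $H^{m-2p}(\widetilde{Z},\bQ)(-p)\ra H^m(X,\bQ)$. Summing over all closed $Z$ of codimension $\geq p$, the kernels appearing in GHC describe precisely the geometric coniveau subspace $N^pH^m(X,\bQ)$. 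In these terms GHC$(X,m,p)$ asserts that every rational sub-Hodge structure of coniveau $\geq p$ (equivalently of level $\leq m-2p$) is geometric, i.e.\ lies in $N^pH^m(X,\bQ)$.

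One inclusion is formal. A Gysin map shifts Hodge type by $(p,p)$ and is a morphism of Hodge structures, so every class in $N^pH^m(X,\bQ)$ already sits inside a sub-Hodge structure of coniveau $\geq p$; hence the geometric coniveau is contained in the largest sub-Hodge structure $V_{\max}$ of coniveau $\geq p$. The entire content of the conjecture is the reverse inclusion $V_{\max}\subset N^pH^m(X,\bQ)$, namely the existence of an actual algebraic subset $Z$ supporting the given $V$.

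For the primitive cohomology $\bK\subset H^{g-1}(\T,\bQ)$ the relevant instance is GHC$(\T,g-1,1)$: since $\bK$ has level $g-3=(g-1)-2$, the integers are $m=g-1$, $p=1$, and the sought $Z$ is a codimension-one (hence $(g-2)$-dimensional) subvariety of $\T$. By the reduction above it suffices to exhibit such a $Z$, together with a resolution $\widetilde{Z}$, for which the Gysin image of $H^{g-3}(\widetilde{Z},\bQ)(-1)$ contains $\bK$. Geometrically I would search for a family of curves sweeping out $Z$ and realize $\bK$ as the image, under the associated Abel--Jacobi correspondence, of the cohomology of the total space of that family, exactly the prediction recorded in the abstract.

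The hard part is the reverse inclusion $V_{\max}\subset N^pH^m$: constructing algebraic cycles that realize a prescribed sub-Hodge structure is where GHC stops being formal and becomes, in full generality, an open problem. For $\T$ the difficulty concentrates in producing the requisite family of curves and in proving, by a Hodge-theoretic or degeneration argument, that the induced map on cohomology surjects onto all of $\bK$ rather than onto a proper sub-Hodge structure. Outside of small $g$ or special classes of ppav (where $\bK$ has level $0$ or $1$ and one may appeal to the Lefschetz $(1,1)$ theorem or to known cases of the Hodge conjecture) no general mechanism is available, so any complete argument must either supply new algebraic cycles or reduce the problem to the classical Hodge conjecture on an auxiliary variety.
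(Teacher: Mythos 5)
The statement you were asked about is a \emph{conjecture} (Grothendieck's general Hodge conjecture), and the paper neither proves it nor claims to: it only states GHC$(X,m,p)$, specializes it to $X=\T$, $m=g-1$, $p=1$, and surveys the cases $g\le 5$ where it is known. So there is no ``paper's own proof'' to compare against, and you were right not to manufacture one. Your write-up is best read as a correct unpacking of the statement rather than a proof attempt, and as such it is sound: the identification of $\ker\{H^m(X,\bQ)\ra H^m(X\setminus Z,\bQ)\}$ with the image of cohomology with supports, the passage via a resolution $\widetilde{Z}\ra Z$ and semi-purity to the image of the Gysin map $H^{m-2p}(\widetilde{Z},\bQ)(-p)\ra H^m(X,\bQ)$, and the resulting reformulation ``Hodge coniveau $\ge p$ implies geometric coniveau $\ge p$'' are all standard and correct (the formal inclusion you verify, that Gysin images are sub-Hodge structures of coniveau $\ge p$, is exactly Grothendieck's correction of Hodge's original formulation). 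Your specialization to $\bK$ also matches the paper precisely: it is the content of the paper's Conjecture 2, which recasts GHC$(\T,g-1,1)$ as the existence of a family of curves $\cZ\ra S$ over a $(g-3)$-dimensional base whose Abel--Jacobi image $q_!r^*H^{g-3}(S,\bQ)$ contains $\bK_\bQ$; and your closing assessment --- that the open content is producing the cycles and proving surjectivity onto all of $\bK_\bQ$, settled only in low dimension --- is exactly how the paper proceeds, via Prym-embedded curves and the $n$-gonal construction for $g=4$ (Izadi--van Straten) and $g=5$ (Izadi--Tam\'as--Wang), with $g=3$ handled by the Lefschetz $(1,1)$-theorem. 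The only caution is terminological: what you submitted is a reduction and a survey of strategy, not a proof, and no proof is currently possible; had you claimed otherwise the argument would have a gap precisely at the inclusion $V_{\max}\subset N^pH^m(X,\bQ)$ that you correctly flagged as open.
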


In the case of $\bK_{\bQ}$, we have $X=\T$, $m=g-1$ and $p=1$. We are therefore looking for a divisor in $\T$.

The general Hodge conjecture for $\bK_{\bQ}$ can be answered positively for $g\leq 5$. Here we survey these results and the tools used to obtain them. We also say a few words about higher dimensional cases and mention other interesting problems related to the primitive cohomology group $\bK$.

There are relatively few examples of lower level sub-Hodge structures of the cohomology of algebraic varieties that are not already contained in the images of the cohomology groups of subvarieties for trivial reasons. Some of the most interesting such examples are provided by abelian varieties, such as abelian varieties of Weil type (see, e.g., \cite{I11}).

For a smooth hypersuface $Y$ of degree $d$ in $\bP^g$, the primitive cohomology $H^{g-1}(Y,\bQ)_0$ is a sub-Hodge structure of coniveau at least $1$ if and only if $Y$ is Fano, since $H^{g-1,0}(Y)\cong H^0(Y,K_Y)$. Thus, if $Y$ is of general type or Calabi-Yau, i.e. $d\ge g+1$, $H^0(Y,\bQ)_0$ is of coniveau $0$.  On the other hand, the general Hodge conjecture is true for Fano hypersurfaces with coniveau $1$ primitive cohomology group (see, e.g.,\cite{voisin11}).

\section{General considerations}

There is a strong relation between the cohomology of $A$ and $\T$. For instance, one has the Lefschetz hyperplane theorem:

\begin{theorem}\label{j*j!}
Let $j : \T \inj A$ be the inclusion. Then
\[
\begin{array}{ll}
j_{*}: H_k(\Theta, \bZ) \lra H_{k}(A, \bZ)&j_{!}:H^{k}(\Theta, \bZ) \lra H^{k+2}(A, \bZ) \\
j^{*}:H^{k}(A, \bZ) \lra H^{k}(\Theta, \bZ) & j^{!}: H_{k+2}(A, \bZ) \lra H_{k}(\Theta, \bZ)\\
{\rm are\; isomorphisms\; for\;} k < g-1, \quad & {\rm are\; isomorphisms\; for\; }k
> g-1.
\end{array}
\]
Also $j_*$ and $j_!$ are surjective for $k=g-1$, $j^*$ and $j^!$ are
injective for $k=g-1$.
The maps $j^!$ and $j_!$ are defined to be
$$ j^{!}:= P_{\Theta}.j^{*}.P_{A}^{-1}, \; \;j_{!}:=
P_{A}^{-1}.j_{*}.P_{\Theta}, \; \; $$
where $P_{\T}:H^{k}(\T, \bZ) \lra H_{2g-2-k}(\T, \bZ)$,
$P_{A}:H^{k}(A, \bZ) \lra H_{2g-k}(A, \bZ)$ are the Poincar\'e duality maps. We also have
\[
\cup \theta=j_{!}.j^{*},
\]
where $\cup \theta$ is the cup product with the fundamental class of $\T$.
\end{theorem}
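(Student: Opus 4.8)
The plan is to deduce the entire statement from the Lefschetz hyperplane theorem for a smooth ample divisor, combined with the purely formal behaviour of Poincar\'e duality and the projection formula; no new geometry is needed, only an organized application of standard tools. The two genuinely geometric maps are the restriction $j^*$ and the pushforward $j_*$, which I would handle first via Lefschetz; the two Gysin maps $j^!$ and $j_!$ then follow automatically because they are defined as conjugates of $j^*$ and $j_*$ by Poincar\'e duality. I expect the only real difficulty to be bookkeeping: invoking the Lefschetz theorem in the form valid for an arbitrary smooth ample divisor (not just a hyperplane section), and keeping the degree shifts straight when passing to $j^!$ and $j_!$.

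First I would establish the claims for $j^*$ and $j_*$. Since $\T$ is a smooth ample divisor in the $g$-dimensional variety $A$, the complement $U:=A\setminus\T$ is a smooth affine variety of complex dimension $g$, so by the Andreotti--Frankel theorem it has the homotopy type of a CW complex of real dimension at most $g$; in particular $H^k(U,\bZ)=0$ and $H_k(U,\bZ)=0$ for $k>g$. Feeding these vanishings into the Gysin sequence of the divisor $\T\subset A$ and its homological analogue gives exactly the needed form of the Lefschetz hyperplane theorem: $j^*:H^k(A,\bZ)\ra H^k(\T,\bZ)$ is an isomorphism for $k<g-1$ and injective for $k=g-1$, while $j_*:H_k(\T,\bZ)\ra H_k(A,\bZ)$ is an isomorphism for $k<g-1$ and surjective for $k=g-1$. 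This settles the left-hand column and the injectivity of $j^*$ and surjectivity of $j_*$ at $k=g-1$.

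Next, the Gysin maps require no further geometric input. By their definitions $j^!=P_{\T}\,j^*\,P_A^{-1}$ and $j_!=P_A^{-1}\,j_*\,P_{\T}$, they are conjugates of $j^*$ and $j_*$ by the Poincar\'e duality isomorphisms $P_A$ and $P_{\T}$, so each is an isomorphism, injection, or surjection exactly where the corresponding map $j^*$ or $j_*$ is, after a shift of degree. Tracing degrees, $j^!:H_{k+2}(A,\bZ)\ra H_k(\T,\bZ)$ factors through $j^*$ in cohomological degree $2g-2-k$, which is $<g-1$ precisely when $k>g-1$; hence $j^!$ is an isomorphism for $k>g-1$ and injective for $k=g-1$. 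Similarly $j_!:H^k(\T,\bZ)\ra H^{k+2}(A,\bZ)$ factors through $j_*$ in homological degree $2g-2-k$, giving an isomorphism for $k>g-1$ and a surjection for $k=g-1$. This is the right-hand column.

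Finally, for the identity $\cup\,\theta=j_!\,j^*$, I would invoke the projection formula for the Gysin map of the closed embedding $j$: for $\alpha\in H^k(A,\bZ)$ and $\beta\in H^*(\T,\bZ)$ one has $j_!(j^*\alpha\cup\beta)=\alpha\cup j_!(\beta)$. Taking $\beta=1\in H^0(\T,\bZ)$ yields $j_!(j^*\alpha)=\alpha\cup j_!(1)$, and since $j_!(1)$ is the fundamental cohomology class $\theta$ of the divisor $\T$, the right-hand side is exactly $\alpha\cup\theta$, as claimed.
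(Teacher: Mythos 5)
Your proposal is correct and matches the paper's approach: the paper's entire proof is a citation to Andreotti--Frankel, and your argument (affineness of $A\setminus\T$, the Andreotti--Frankel vanishing fed into the Gysin/pair sequences, transport through Poincar\'e duality for $j^!$ and $j_!$, and the projection formula with $j_!(1)=\theta$ for the last identity) is exactly the standard proof behind that citation. The degree bookkeeping in your second paragraph checks out: the dualized maps land in (co)homological degree $2g-2-k$, which is below $g-1$ precisely when $k>g-1$, giving the stated ranges.
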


\begin{proof}
See, e.g., \cite{andreottifrankel59}.
\end{proof}

It is well-known (see, e.g., \cite{I4} Proposition 1.1) that the integral cohomology and homology groups of $\T$ and $A$ are torsion-free.

The cohomology of $\T$ is therefore determined by that of $A$ except in degree $g-1$. Following \cite{I4} and \cite{I17}, define
\[
\bK := \ker \{ j_{!}:H^{g-1}(\Theta, \bZ) \surj \; H^{g+1}(A, \bZ) \}
\]
so that its dual lattice is (see \cite{I4} Proposition 1.3)
\[
\bK^* = \coker \{ j^{*}:H^{g-1}(A, \bZ) \inj H^{g-1}(\Theta, \bZ) \}.
\]

\begin{lemma}
The rank of $\bK$ is
\[
rank (\bK) = g! - \frac{1}{g+1}{2g \choose g}.
\]
\end{lemma}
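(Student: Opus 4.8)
The plan is to extract a short exact sequence from the Lefschetz hyperplane theorem (Theorem~\ref{j*j!}) and reduce the computation to Betti numbers. Since $j_!$ is surjective in degree $g-1$, we obtain
\[
0 \lra \bK \lra H^{g-1}(\T, \bZ) \xrightarrow{\ j_!\ } H^{g+1}(A, \bZ) \lra 0 .
\]
All these groups are torsion-free (as recalled above), so taking ranks gives $\operatorname{rank}(\bK) = b_{g-1}(\T) - b_{g+1}(A)$, where $b_k$ denotes the $k$-th Betti number. Everything now comes down to these two numbers.

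The Betti numbers of $A$ are immediate: $H^k(A,\bZ) \cong \wedge^k H^1(A,\bZ)$ with $H^1(A,\bZ) \cong \bZ^{2g}$, so $b_k(A) = \binom{2g}{k}$ and in particular $b_{g+1}(A) = \binom{2g}{g+1}$. For $\T$, Theorem~\ref{j*j!} identifies $H^k(\T,\bZ)$ with $H^k(A,\bZ)$ when $k < g-1$ and with $H^{k+2}(A,\bZ)$ when $k > g-1$; hence $b_k(\T) = \binom{2g}{k}$ for $k<g-1$ and $b_k(\T)=\binom{2g}{k+2}$ for $k>g-1$. Only the middle Betti number $b_{g-1}(\T)$ is not yet determined.

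To pin down $b_{g-1}(\T)$ I would compute the topological Euler characteristic $\chi(\T)$ in two ways. Combinatorially, $\chi(\T)=\sum_k(-1)^k b_k(\T)$ is a known alternating sum of binomial coefficients together with the single unknown contribution $(-1)^{g-1}b_{g-1}(\T)$. Geometrically, adjunction gives $c(T_\T) = (1+\theta)^{-1}|_\T$, since $T_A$ is trivial and $N_{\T/A} = \cO_A(\T)|_\T$; thus the top Chern class is $(-1)^{g-1}\theta^{g-1}|_\T$ and
\[
\chi(\T) = \int_\T (-1)^{g-1}\theta^{g-1}|_\T = (-1)^{g-1}\int_A \theta^{g} = (-1)^{g-1}\, g!,
\]
the last equality being the principal-polarization normalization $\int_A\theta^g = g!$. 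Equating the two expressions solves for $b_{g-1}(\T)$.

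Substituting back yields $\operatorname{rank}(\bK) = b_{g-1}(\T) - \binom{2g}{g+1}$, which collapses — using $\sum_{k=0}^{2g}(-1)^k\binom{2g}{k}=0$ to telescope the alternating sum and $\binom{2g}{g+1}=\binom{2g}{g-1}$ — to
\[
\operatorname{rank}(\bK) = g! + \binom{2g}{g-1} - \binom{2g}{g}.
\]
The final simplification is the Catalan-number identity $\binom{2g}{g} - \binom{2g}{g-1} = \tfrac{1}{g+1}\binom{2g}{g}$, which rewrites the right-hand side as $g! - \tfrac{1}{g+1}\binom{2g}{g}$, as claimed. I expect the only substantive step to be the Euler-characteristic computation, and within it the intersection number $\int_A\theta^g = g!$; the remaining manipulations are routine binomial bookkeeping, with the Catalan identity packaging the answer.
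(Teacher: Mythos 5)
Your proposal is correct and follows essentially the same route as the paper: both reduce the rank computation to an Euler-characteristic comparison via the Lefschetz theorem (the paper's formula $\operatorname{rank}(\bK)=h^{g+1}(A)-h^g(A)+(-1)^{g-1}(\chi_{top}(\T)-\chi_{top}(A))$ is exactly your two-way $\chi(\T)$ bookkeeping made compact, using $\chi_{top}(A)=0$), and both get $\chi_{top}(\T)=(-1)^{g-1}g!$ from the normal bundle sequence $0\ra T_\T\ra T_A|_\T\ra\cO_\T(\T)\ra0$ and $\int_A\theta^g=g!$. Your write-up merely makes explicit the Betti-number telescoping and the Catalan identity that the paper leaves as routine.
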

\begin{proof}
It follows easily from
Theorem \ref{j*j!} that the rank of $\bK$ is
\[
rank (\bK)=h^{g+1}(A)-h^{g}(A)+(-1)^{g-1}(\chi_{top}(\T)-\chi_{top}(A)).
\]
Using the exact sequence 
\[0\lra T_{\T}\lra T_A|_{\T}\lra \cO_{\T}(\T)\lra 0,
\]
we see that the total Chern class $c(T_{\T})$ of the tangent bundle of $\T$ satisfies the identity
\[
c(T_{\T})(1+\theta|_\T)=1.
\]
Therefore
\[
\chi_{top}(\T) =\deg c_{g-1}(T_{\T})=  (-1)^{g-1}g!.
\]
As $h^{g+1}(A) = {2g \choose g+1}$ and $h^g(A) = {2g \choose g}$, we find
\[
rank (\bK) = g! - \frac{1}{g+1}{2g \choose g}
\]
as claimed.
\end{proof}

The integral lattices $\bK$ and $\bK^*$ inherit Hodge structures from $H^{g-1} (\T, \bZ)$.
One can use Griffiths' residue calculus \cite{griffiths69} to compute all the Hodge summands of $\bK$ as follows. 

Put $U :=A\setminus\T$ and let $i:U\ra A$ be the natural inclusion. Also, for an integer $k \in \{ 0, \ldots , g \}$, let
\[
H^{g-k} (A, \bQ)_{prim} := \ker \{ \cup \theta^{k+1} : H^{g-k} (A, \bQ ) \lra H^{g+k+2} (A, \bQ) \}
\]
be the $(g-k)$-th primitive cohomology group of $A$. The Gysin exact sequence (see \cite[p. 159]{voisin03})
\[
\lra H^{g-2}(\T,\bQ)\stackrel{j_!}\lra H^g(A,\bQ)\stackrel{i^*}\lra H^g(U,\bQ)\stackrel{Res}\lra H^{g-1}(\T,\bQ)\stackrel{j_!}\lra H^{g+1}(A,\bQ)\lra
\]
induces a short exact sequence of mixed Hodge structures
\[
0\lra H^g(A,\bQ)_{prim}\lra H^g(U,\bQ)\stackrel{Res}\lra\bK_\bQ\lra0.
\]
Thus for $0\le p\le g$, the induced sequence on the Hodge filtration 
\begin{eqnarray}\label{eqnHodge}
0\lra H^{g-p,p}(A)_{prim}\lra \frac{F^{g-p}H^g(U)}{F^{g-p+1}H^g(U)}\lra \bK^{g-p-1,p}\lra0
\end{eqnarray}
is exact.

Griffiths' residue calculus implies that there is an exact sequence 
\begin{eqnarray}\label{eqnGriff}
H^0(\Omega^{g-1}_A(p\T))\stackrel{d}\lra H^0(\omega_A((p+1) \T))\stackrel{\alpha_{p}}\lra F^{g-p}H^g(U)\lra 0
\end{eqnarray}
where the leftmost map is the exterior derivative and the middle map sends a rational $g$-form on $A$ with a pole of order $\leq p+1$ on $\T$ to its De-Rham class in $U$ (c.f. \cite[pp. 160-162]{voisin03}).

Denote 
\[
\overline{\alpha}_{p}: H^0(\T,\cO_\T((p+1)\T))\cong \frac{H^0(\omega_A((p+1)\T))}{H^0(\omega_A(p\T))}\lra \frac{F^{g-p}H^g(U)}{F^{g-p+1}H^g(U)}
\]
the induced map.

\begin{lemma}
The Hodge structure on $\bK$ satisfies $\bK^{g-1,0}=0$ and $\dim_\bC\bK^{g-2,1}=2^g-1-\frac{g(g+1)}{2}$. Thus $\bK$ and $\bK^*$ have level $g-3$ ($g\ge3$).
For $p\ge2$, we have an exact sequence
\[
H^0(\T,\cO_\T(\T))\otimes H^0(\T,\cO_\T(p\T))\lra H^0(\T,\cO_\T((p+1)\T))\stackrel{\overline{\alpha}_{p}}\lra \frac{F^{g-p}H^g(U)}{F^{g-p+1}H^g(U)}\lra0,
\]
i.e. $\frac{F^{g-p}H^g(U)}{F^{g-p+1}H^g(U)}$ is isomorphic to the Koszul cohomology group $K_{0,p+1}(\T,\cO_\T(\T))\cong K_{0,p+1}(\T,K_\T)$ (see, e.g., \cite{green841}). 
\end{lemma}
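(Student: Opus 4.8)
The plan is to run everything through the two exact sequences (\ref{eqnHodge}) and (\ref{eqnGriff}) already at hand, after making the map $d$ in (\ref{eqnGriff}) completely explicit. Since $\omega_A\cong\cO_A$ is trivialized by the invariant top form $\omega=dz_1\wedge\cdots\wedge dz_g$, and $\Omega^{g-1}_A$ is trivialized by the invariant forms $\omega_j=dz_1\wedge\cdots\wedge\widehat{dz_j}\wedge\cdots\wedge dz_g$, I would identify $H^0(\omega_A(k\T))\cong H^0(\cO_A(k\T))$ with meromorphic functions having poles of order $\le k$ on $\T$, writing any $\Xi\in H^0(\Omega^{g-1}_A(p\T))$ as $\Xi=\sum_j (G_j/\theta^p)\,\omega_j$ with $G_j\in H^0(\cO_A(p\T))$ and $\theta$ the canonical section cutting out $\T$. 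Because the invariant forms are $d$-closed and $dz_k\wedge\omega_j=\pm\delta_{jk}\,\omega$, the exterior derivative becomes, up to sign, the divergence $\Xi\mapsto\bigl(\sum_j\partial_{z_j}(G_j/\theta^p)\bigr)\,\omega$. This reduces the entire lemma to understanding such divergences modulo lower–order poles.

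For the two Hodge numbers I would specialize (\ref{eqnHodge}). At $p=0$ both outer terms are one–dimensional: $H^{g,0}(A)_{prim}=H^{g,0}(A)$ since $\cup\,\theta$ sends it into $H^{g+1,1}(A)=0$, while $F^gH^g(U)=H^0(\omega_A(\T))/d(H^0(\Omega^{g-1}_A))$ has dimension $h^0(\cO_A(\T))=1$ as the invariant $(g-1)$–forms are closed; the injection in (\ref{eqnHodge}) then forces $\bK^{g-1,0}=0$. At $p=1$ the kernel of $\overline\alpha_1$ consists of divergences of $\Xi\in H^0(\Omega^{g-1}_A(\T))$, but $H^0(\cO_A(\T))=\bC$ is constants, so these $\Xi$ are invariant and $d\Xi=0$; hence $\overline\alpha_1$ is an isomorphism and $\dim\frac{F^{g-1}H^g(U)}{F^gH^g(U)}=h^0(\cO_\T(2\T))=2^g-1$. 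Subtracting the primitive Hodge number $\dim H^{g-1,1}(A)_{prim}=h^{g-1,1}(A)-h^{g-2,0}(A)=g^2-\binom{g}{2}=\tfrac{g(g+1)}{2}$ gives $\dim\bK^{g-2,1}=2^g-1-\tfrac{g(g+1)}{2}$. Since $\bK^{g-1,0}=0$ forces $\bK^{0,g-1}=0$ by conjugation while $\bK^{g-2,1}\neq0$ for $g\ge3$, the extreme nonzero types of this weight-$(g-1)$ structure are $(g-2,1)$ and $(1,g-2)$, so $\bK$, and dually $\bK^*$, has level $g-3$.

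The substantive step, and the one I expect to be the main obstacle, is the identification of $\ker\overline\alpha_p$ with $\im\mu$ for $p\ge2$. By exactness of (\ref{eqnGriff}) on graded pieces, $\ker\overline\alpha_p$ is exactly the image in $H^0(\cO_\T((p+1)\T))\cong H^0(\omega_A((p+1)\T))/H^0(\omega_A(p\T))$ of the divergences $d\Xi$. I would next identify the normal sections: the sequence $0\to T_\T\to T_A|_\T\to\cO_\T(\T)\to0$ together with $H^0(T_\T)=0$ (as $\cO_\T(\T)\cong K_\T$ is ample, so $\T$ is of general type for $g\ge2$) yields an isomorphism $T_0A\xrightarrow{\sim}H^0(\cO_\T(\T))$ sending $\partial_{z_j}$ to $\nu_j:=(\partial_{z_j}\theta)|_\T$, these being well defined on $\T$ because the quasi-periodic correction to $\partial_{z_j}\theta$ is a multiple of $\theta$. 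The key computation is then that, writing $d\Xi=(N/\theta^{p+1})\,\omega$ with $N=\theta\sum_j\partial_{z_j}G_j-p\sum_j G_j\,\partial_{z_j}\theta$ a genuine section of $\cO_A((p+1)\T)$, its restriction is $N|_\T=-p\sum_j(G_j|_\T)\,\nu_j$, since $\theta|_\T=0$ kills the first sum. As $G_j|_\T$ ranges over $H^0(\cO_\T(p\T))$ — restriction being surjective for $p\ge2$ because $H^1(\cO_A((p-1)\T))=0$ — this exhibits $\ker\overline\alpha_p$ precisely as $\im\{\mu:H^0(\cO_\T(\T))\otimes H^0(\cO_\T(p\T))\to H^0(\cO_\T((p+1)\T))\}$. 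The delicate points are that only the combination $N$ descends to $A$ (the individual terms $\partial_{z_j}\theta$ and $\partial_{z_j}G_j$ are not periodic), and the surjectivity of restriction, which is exactly where the hypothesis $p\ge2$ is used.

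Finally, given $\ker\overline\alpha_p=\im\mu$ and the surjectivity of $\overline\alpha_p$ inherited from (\ref{eqnGriff}), the graded piece $\frac{F^{g-p}H^g(U)}{F^{g-p+1}H^g(U)}$ is isomorphic to $\coker\mu$, which is by definition the Koszul cohomology $K_{0,p+1}(\T,\cO_\T(\T))$; the adjunction isomorphism $\cO_\T(\T)\cong K_\T$ (from $K_A\cong\cO_A$) then rewrites this as $K_{0,p+1}(\T,K_\T)$, completing the argument.
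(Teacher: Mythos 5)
Your proposal is correct and follows essentially the same route as the paper: both reduce the statement to the residue sequence (\ref{eqnGriff}), observe that the image of $d$ vanishes for $p=0,1$ and read the Hodge numbers $\bK^{g-1,0}=0$ and $\dim\bK^{g-2,1}=2^g-1-\frac{g(g+1)}{2}$ off (\ref{eqnHodge}), and for $p\ge2$ compute the exterior derivative (divergence) explicitly and restrict the numerator to $\T$, where the term containing $\theta$ dies, to identify $\ker\overline{\alpha}_{p}$ with the image of the multiplication map. The only differences are that you supply inline proofs of two ingredients the paper cites or leaves implicit --- that $\{(\partial\theta/\partial z_i)|_\T\}$ is a basis of $H^0(\T,\cO_\T(\T))$ (the paper cites Green) and that the restriction $H^0(\cO_A(p\T))\to H^0(\T,\cO_\T(p\T))$ is surjective for $p\ge2$ via $H^1(\cO_A((p-1)\T))=0$, which is indeed exactly where the hypothesis $p\ge2$ enters.
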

\begin{proof}
 When $p=0,1$, the image of the exterior derivative in (\ref{eqnGriff}) is zero. We conclude from (\ref{eqnGriff}) that $H^0(\omega_A(\T))\cong F^gH^g(U)\cong \bC$ and 
\[
H^0(\T,\cO_\T(2\T)){\cong}\frac{F^{g-1}H^g(U)}{F^{g}H^g(U)}.
\]
Therefore, by (\ref{eqnHodge}), $\bK^{g-1,0}=0$. Since $h^0((\T,\cO_\T(2\T))=2^g-1$ by Riemann-Roch and $h^{g-p,p}(A)_{prim}=\frac{g(g+1)}{2}$, we obtain from (\ref{eqnHodge}) that $\dim_\bC\bK^{g-2,1}=2^g-1-\frac{g(g+1)}{2}$.

Let $Q_{p+1}\in H^0(\cO_A((p+1)\T))$ be such that $\frac{Q_{p+1}}{\theta^{p+1}}dz_1\wedge...\wedge dz_g\in F^{g-p+1}H^g(U)$ where $\{dz_1,...,dz_g\}$ form a basis of $H^0(A,\Omega_A^1)$.
Thus, by (\ref{eqnGriff}), there exists $Q_p\in H^0(\cO_A(p\T))$ such that the rational form
\[
(\frac{Q_{p+1}-\theta Q_p}{\theta^{p+1}})dz_1\wedge...\wedge dz_g=d\gamma
\]
for some $\gamma=\sum_{i=1}^g(\frac{s_i}{\theta^p})dz_1\wedge...\wedge \hat{dz_i}\wedge...\wedge dz_g$, with $s_i\in H^0(\cO_A(p\T))$.

We directly compute
\[
d\gamma=\sum_{i=1}^g(-1)^i \left(\frac{\frac{\partial s_i}{\partial z_i}\theta+(-p)s_i\frac{\partial \theta}{\partial z_i}}{\theta^{p+1}} \right)dz_1\wedge...\wedge dz_g.
\]
Comparing the two sides, we see that 
\[
Q_{p+1}-\theta Q_p=\sum_{i=1}^g(-1)^i \left(\frac{\partial s_i}{\partial z_i}\theta+(-p)s_i\frac{\partial \theta}{\partial z_i} \right).
\]
Restricting the above equality to $\T$, we obtain 
\[
Q_{p+1}|_\T=(-p)\sum_{i=1}^g(-1)^i \left(s_i\frac{\partial \theta}{\partial z_i} \right)|_\T  \quad \in H^0(\T,\cO((p+1)\T) ).
\]
Since $\{\frac{\partial\theta}{\partial z_i}:i=1,...,g\}$ form a basis of $H^0(\T,\cO_\T(\T))$ (see, e.g, \cite[p. 92]{green84}), we conclude our proof.
\end{proof}

For $g\leq 2$, $\bK =0$. For $g=3$, the lattice $\bK$ has rank $1$ and level $0$, i.e., it is generated by a Hodge class of degree $2$. By the Lefschetz $(1,1)$-theorem, this is a rational linear combination of classes of algebraic cycles. In fact, in this case, one can write an explicit cycle generating $\bK$ as follows. The abelian variety $(A, \T) = (JC, \T_C) $ is the Jacobian of a curve of genus $3$. The theta divisor is isomorphic to the second symmetric power $C^{(2)}$ of $C$ and $\bK$ is generated by the class $\theta - 2\eta$ where $\eta$ is the cohomology class of the image of $C$ in $C^{(2)}$ via addition of a point $p$ of $C$:
\[
\begin{array}{ccc}
C & \inj & C^{(2)} \\
t & \mapsto & t+p.
\end{array}
\]
For higher values of $g$, the following equivalent formulation of the Hodge conjecture has been useful (see e.g. \cite{I11}).

\begin{conjecture}\label{hodge3}
There exists a nonsingular projective family of
curves in $\T$
\[
\begin{array}{ccc}
\cZ & \stackrel{q}{\lra} & \T \\
\downarrow^r & & \\
S & &
\end{array}
\]
whose base is a (possibly reducible) nonsingular projective variety $S$ of dimension $g-3$
such that the image of $H^{ g-3 } (S ,\bQ)$ by the Abel-Jacobi map
$q_! r^*$ of the family contains $\bK_{\bQ}$.
\end{conjecture}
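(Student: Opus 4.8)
The plan is to construct the family directly from the Gauss map of $\T$ and then force the Abel--Jacobi map to be surjective onto $\bK_\bQ$ by reducing surjectivity to a non-vanishing statement through the representation theory of the symplectic group; I take $g\ge4$, the case $g=3$ being the explicit class $\theta-2\eta$ described above. Let $\gamma:\T\to\bP^{g-1}$ be the Gauss map, which is nothing but the map defined by the complete linear system $|\cO_\T(\T)|$ whose sections are the $\partial\theta/\partial z_i$ appearing in the Lemma above; it is generically finite of degree $g!$ and satisfies $\gamma^*\cO_{\bP^{g-1}}(1)=\cO_\T(\T)$. For a line $\ell\subset\bP^{g-1}$ the preimage $\gamma^{-1}(\ell)$ is a curve in $\T$, so for a $(g-3)$-dimensional subvariety $S_0$ of the Grassmannian $\mathrm{Gr}(2,g)$ of lines the incidence variety $\{(x,\ell):\ell\in S_0,\ x\in\gamma^{-1}(\ell)\}$ fibers over $S_0$ in curves and maps to $\T$ with image the divisor $\gamma^{-1}\!\left(\bigcup_{\ell\in S_0}\ell\right)$. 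After passing to resolutions of this incidence variety and of its base, and allowing $S_0$ (hence $S$) to be reducible as the statement permits, I obtain a nonsingular projective family $r:\cZ\to S$, $q:\cZ\to\T$, of the required dimension $g-3$.

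Next I would check that $q_!r^*$ already lands in $\bK_\bQ$ and is a morphism of Hodge structures. Because $r$ has one-dimensional fibers and $q$ is generically finite onto a divisor, $q_!r^*$ has Hodge bidegree $(1,1)$ and carries $H^{p,q}(S)$ into $H^{p+1,q+1}(\T)$; in particular the holomorphic forms $H^0(S,K_S)=H^{g-3,0}(S)$ land in the highest nonzero piece $\bK^{g-2,1}$ computed above. Primitivity is automatic: for $\beta\in H^{g-1}(A,\bQ)$ the projection formula gives $\int_\T q_!r^*\xi\cup j^*\beta=\int_S\xi\cup r_!\big((j\circ q)^*\beta\big)$, and the push-forward $r_!\big((j\circ q)^*\beta\big)$ vanishes because $(j\circ q)^*\beta$ has degree $g-1\ge3$ while the fibers of $r$ are curves, so its fiberwise integral is zero. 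Hence $j_!\,q_!r^*\xi=0$, i.e. the image of $q_!r^*$ is a rational sub-Hodge structure of $\bK_\bQ$.

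The crux is surjectivity, and here I would exploit monodromy. For $A$ outside a countable union of proper subvarieties of $\cA_g$ the Mumford--Tate group of the variation $\bK$ is the full symplectic group, so $\bK_\bQ$ is a genuine $\mathrm{Sp}(2g)$-module and every rational sub-Hodge structure, in particular $\operatorname{im}(q_!r^*)$, is a subrepresentation. I would therefore first determine the irreducible constituents $W$ of $\bK_\bQ$ as an $\mathrm{Sp}(2g)$-module, and then, for each $W$, produce a $(g-3)$-dimensional Gauss family whose Abel--Jacobi map has nonzero projection to $W$; by irreducibility the image then contains all of $W$, and taking the reducible union of these families over all $W$ yields image exactly $\bK_\bQ$. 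For each $W$ this is a single non-vanishing assertion, which can be attacked infinitesimally through the residue and Koszul description of the Hodge pieces above — note that those computations live precisely in the Koszul cohomology $K_{0,p+1}(\T,\cO_\T(\T))$ of the very linear system defining $\gamma$, so the two constructions are compatible.

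The main obstacle is exactly this family of non-vanishing statements, together with the descent from the very general $A$ to an arbitrary smooth $\T$. The dimension of the top piece, $2^g-1-\tfrac{g(g+1)}{2}$, grows exponentially in $g$, so even granting irreducibility one must control the projection of the Abel--Jacobi image onto every constituent and verify that a family of fixed dimension $g-3$, or a reducible refinement of one, meets each of them; carrying this out uniformly in $g$ is the heart of the difficulty. Finally, the representation-theoretic argument only governs the very general member, so to reach every ppav I would spread the family out over $\cA_g$, observe that $q_!r^*H^{g-3}(S)$ is a sub-variation of $\bK$ of generically maximal rank, and descend to the remaining special $\T$ — Jacobians among them, where the classical Abel--Jacobi curves provide an independent construction — by a specialization argument whose delicacy is where most of the residual work would concentrate.
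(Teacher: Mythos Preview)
The statement you are trying to prove is labeled \emph{Conjecture} in the paper, not Theorem; the paper does not prove it. Immediately before stating it the authors say it is ``the following equivalent formulation of the Hodge conjecture'', and the rest of the paper is devoted to verifying it only for $g=4$ and $g=5$ via ad~hoc Prym-theoretic constructions, while explicitly listing the higher-dimensional case among the open problems. So there is no ``paper's own proof'' to compare against; any genuine proof here would settle $GHC(\T,g-1,1)$ for every smooth theta divisor.

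Your proposal is not such a proof, and you essentially acknowledge this yourself: the ``crux'' paragraph reduces everything to a non-vanishing statement for each irreducible $\mathrm{Sp}(2g)$-constituent of $\bK_{\bQ}$ and then says that ``carrying this out uniformly in $g$ is the heart of the difficulty''. That sentence is accurate --- it \emph{is} the heart of the difficulty, and nothing in the Gauss-line construction or in the Koszul description of the Hodge pieces gives a mechanism for producing those non-vanishings. The reduction via Mumford--Tate is standard and correct for the very general $A$, but it only converts the problem into an equally hard one; the specialization step at the end is likewise just a wish. As it stands the argument is a strategy outline with the main step missing, not a proof.

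There is also a concrete error in the second paragraph. You claim that $r_!\big((j\circ q)^*\beta\big)=0$ for every $\beta\in H^{g-1}(A,\bQ)$ ``because $(j\circ q)^*\beta$ has degree $g-1\ge3$ while the fibers of $r$ are curves, so its fiberwise integral is zero''. This confuses \emph{restriction to fibers} (which indeed lands in $H^{g-1}$ of a curve, hence vanishes for $g\ge4$) with \emph{integration along fibers}, which lowers the degree by $2$ and lands in $H^{g-3}(S)$; the vanishing of the former does not imply the vanishing of the latter. In fact the paper's own families for $g=4,5$ have Abel--Jacobi image equal to $\bH_{\bQ}=\bK_{\bQ}\oplus\theta\cup H^{g-3}(A,\bQ)$, strictly larger than $\bK_{\bQ}$, so $j_!q_!r^*$ is \emph{not} zero there. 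Fortunately this step is also unnecessary: the conjecture asks only that the image \emph{contain} $\bK_{\bQ}$, not that it be contained in $\bK_{\bQ}$, so you should simply drop the primitivity claim rather than try to repair it.
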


For $g=4, 5$, a positive answer was given to the above conjecture by using the ``largest'' sub-Hodge structure $\bH$ of coniveau $1$ of $H^{g-1}(\T, \bZ)$ defined as follows (see \cite{I4} and \cite{I17}).

First consider the image of $H^{g-3} (\T, \bZ) =j^* H^{g-3} (A, \bZ)$ under cup product with the cohomolgy class $\theta$ of $\T$. This is also a sub-Hodge structure of level $g-3$ and satisfies the Hodge conjecture since it is contained in the image, for instance, of the cohomology  of an intersection of a translate of $\T$ with $\T$.

Put
\[
P^{g-1} := H^{g-1}(A, \bZ)_{prim} = \ker \{ \cup \theta^2 : H^{g-1} (A, \bZ) \lra H^{g+3} (A ,\bZ) \}.
\]
Choosing a symplectic basis $\{\alpha_1, \ldots , \alpha_g, \beta_1 , \ldots , \beta_g \}$ of $H^1 (A, \bZ)$, it is immediately seen that the wedge products $\gamma_{i_1} \wedge \ldots \wedge \gamma_{i_{g-1}}$ form a $\bZ$-basis of $P^{g-1}$ where $\gamma_i = \alpha_i$ or $\beta_i$ and $i_1 < \ldots < i_{g-1}$. It follows that the dual of the embedding
\[
P^{g-1} \inj H^{g-1} (A , \bZ)
\]
is a surjection
\[
H^{g+1} (A, \bZ ) \surj \: (P^{g-1})^*
\]
after identifying $H^{g+1} (A, \bZ )$ with the dual of $H^{g-1} (A , \bZ)$ using the intersection pairing.

The Hodge structure $\bH$ can then be defined as the kernel of the composition
\[
\bH := \ker \{ H^{g-1} (\T , \bZ) \stackrel{j_!}{\surj} H^{g+1} (A ,\bZ) \surj \: (P^{g-1})^* \}.
\]
It follows from the results of \cite{Hazama94} that, for $(A, \T)$ generic (i.e., outside a countable union of Zariski closed subsets of the moduli space $\cA_g$), any rational sub-Hodge structure of $H^{g-1} (A, \bQ)$ of coniveau $1$ or more is contained in $\theta \cup H^{g-3} (A, \bQ)$. Therefore any rational sub-Hodge structure of $H^{g-1} (\T , \bQ)$ of coniveau $1$ or more is contained in $\bH_{\bQ} = \bH\otimes \bQ$.

Note that $\bH_{\bQ} = \bK_{\bQ} \oplus \theta \cup H^{g-3}(A, \bQ)$. Therefore the Hodge conjecture for $\bH_{\bQ}$ is equivalent to the Hodge conjecture for $\bK_{\bQ}$.

We discuss the cases $g=4$ and $5$ in Sections \ref{secA4} and \ref{secA5}. In Sections 2 and 3 below we review two of the main tools used in the proofs for $g=4,5$: Prym varieties and $n$-gonal constructions.

\section{Useful facts about Prym varieties}

Let ${\cR}_{g+1}$ be the coarse moduli space of admissible (in the sense of \cite{beauville771}) double covers of stable curves of genus $g+1$. The moduli space ${\cR}_{g+1}$ is a partial compactification of the moduli space of \'etale double covers of smooth curves. Beauville \cite{beauville771} showed that the Prym map $\cP_g : {\cR}_{g+1} \lra {\cA}_{g}$ is proper. The prym map $\cP_g$
associates to each admissible double cover $(\pi: \tX \lra X)$ of a stable
curve $X$ of genus $g+1$ its Prym variety
\[
\begin{array}{ccc}
P(\tX,X)& := & Im(1-\sigma^{*}:J(\tX) \lra J(\tX))\\
        &  = & Ker^{0}(\nu: J(\tX) \lra J(X))
\end{array}
\]
where $\sigma$ is the involution interchanging the two sheets of $\pi$,
$\nu : Pic(\tX) \lra Pic(X)$ is the norm map and by $Ker^0(\nu)$ we mean the
component of the identity in the kernel of $\nu$.
For general background on the Prym construction we refer to \cite{beauville771}
and \cite{mumford74}.
The Prym maps $\cP_4$ and $\cP_5$ are surjective \cite{beauville771}.\\

There is a useful parametrization of the Prym variety of a covering.
Consider the following subvarieties of $Pic^{2g}(\tX)$
$$A^{+}:= \{D \in Pic^{2g}(\tX): \nu (D) \cong \omega_{X}, h^{0}(D) {\rm \;\;
even}\}$$
$$A^{-}:=\{D \in Pic^{2g}(\tX): \nu (D) \cong \omega_{X},  h^{0}(D) {\rm \;\;
odd}\}$$
Both are principal
homogeneous spaces over $A$. The divisor $\T$ is
a translate of
$$ \T^+ =\{ L \in A^{+}: h^{0}(L) > 0 \}.$$
For each $D \in A^{-}$ we have an embedding (see \cite[p. 97]{I3})
$$\phi_D : \tX \lra A^{+} \subset J(\tX)\quad ; \quad x \mapsto D(L_x - \sigma (L_x))$$
where $L_x$ is an effective Cartier divisor of degree 1 on $\tX$ with
support $x$. The image $\tX_D$ of such a morphism is called a Prym-embedding of
$\tX$ or a Prym-embedded curve.\\
Note that $\tX_D \subset \T^+$ if and only if $h^0 (D) \geq 3$. The set of Prym-embeddings of $\tX$ in $\T^+$ is therefore parametrized by
\[
\lambda(\tX) := \{ D \in A^{-}: h^{0}(D) > 1 \}.
\]
The involution $\sigma : \tX \ra \tX$ induces an involution, also denoted $\sigma$:
\[
\sigma : \lambda(\tX) \lra \lambda(\tX) \quad ; \quad D \longmapsto \sigma^* D.
\]
We put
\[
\lambda (X) := \lambda (\tX) / \sigma.
\]
Note that $\sigma$ has finitely many fixed points in $A^-$, hence at most finitely many fixed points in $\lambda(\tX)$.

\section{The $n$-gonal construction}

Suppose given an \'etale double cover $\kappa : \tX \ra X$ of a smooth curve $X$ of genus $g+1$. Suppose also given a non-constant map $X \ra \bP^1$ of degree $n$.

Sending a point of $\bP^1$ to the sum of the points of $X$ above it, allows us to think of $\bP^1$ as a subscheme of $X^{(n)}$, where $X^{(n)}$ is the $n$-th symmetric power of $X$. Let $\tC\subset\tX^n$ be the curve defined by the
fiber product diagram

\begin{eqnarray} \label{eq1}
\tC & \inj & \tX^{ (n) } \nonumber \\
\downarrow & & \downarrow \kappa^{ (n) } \\
\bP^1 & \inj & X^{ (n) }. \nonumber 
\end{eqnarray}

In other words, the curve $\tC$ parametrizes the $2^n$ points lifting the same point of
$\bP^1$. The involution $\sigma$ also induces an involution on $\tC$, still denoted $\sigma$. The curve $\tC$ has two connected components $\tC_1$ and $\tC_2$ which are exchanged under $\sigma$ if $n$ is odd. If $n$ is even, $\sigma$ leaves each component globally invariant (see e.g. \cite{beauville82}).

\section{The case $g=4$}\label{secA4}

Since $dim({\cA}_{4})=10$ and
$dim({\cR}_{5})=12$, the fiber $\cP_4^{-1}(A)$ for $A$ generic in $\cA_{4}$
is a smooth surface. When $\T$ is smooth, the fiber is always
a surface and the generic elements of any component of the fiber are double
covers of smooth curves (see \cite{I3} pages 111, 119 and 125).

If $A$ is neither decomposable nor the
Jacobian of a hyperelliptic curve, then $\lambda(\tX)$ is a curve and the Prym variety of the double cover $\lambda(\tX) \ra \lambda(X)$ is isomorphic to $(A, \T)$ (see \cite{I3} p. 119). Sending $\lambda: (\tX,X)$ to $(\lambda(\tX),\lambda(X))$ defines an involution $\lambda$ acting on the fibers of the Prym map $\cP_4$.

To $(A,\T)\in\cA_4$ with smooth $\T$, one can associate a smooth cubic threefold $T$ (\cite{I3}, \cite{donagi92}). The quotient of the fiber $\cP_4^{-1}(A)$ by the involution $\lambda$ can be identified with the Fano surface $F$ of lines on $T$.

Let $\cF$ be the scheme parametrizing the family of
Prym--embedded curves {\rm inside} $\T$.

It follows that the fiber of the natural
projection 
$$ \cF \lra \cP_4^{-1}(A)$$ 
over the point $(\tX,X) \in
\cP_4^{-1}(A)$ is the curve $\lambda(\tX)$. In particular, the
dimension of $\cF$ is three. In general, $\cF$ might be
singular, but for $A$ general $\cF$ is smooth, see \cite[Section 3]{I4}.

Let $\cC\ra \cF$ be the tautological family over $\cF$ with the natural map to $\T^+$:
\[
\xymatrix{\cC\ar[r]^-{q}\ar[d]^-{r}&\T^+\\
\cF.}
\]

\begin{theorem}(\cite{I4}) For $(A,\T)$ general in $\cA_4$, the image of the Abel-Jacobi map $q_!r^*: H^5(\cF,\bQ)
\ra H^3(\T^+,\bQ)$ is equal to $\bH_{\bQ}$.
\end{theorem}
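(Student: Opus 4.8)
The plan is to prove the two inclusions separately; the containment $\im(q_!r^*)\subseteq\bH_\bQ$ is formal, and the reverse inclusion carries all the geometric content. Since $\cC$ has dimension $4$ and $\T^+$ has dimension $3$, the Gysin map $q_!$ is a morphism of Hodge structures of bidegree $(-1,-1)$ while $r^*$ preserves bidegree. As $\cF$ is a smooth projective threefold, $H^5(\cF,\bC)$ has Hodge components only in bidegrees $(3,2)$ and $(2,3)$, so $\im(q_!r^*)$ is supported in bidegrees $(2,1)$ and $(1,2)$; it is therefore a rational sub-Hodge structure of $H^3(\T^+,\bQ)$ of coniveau $\geq1$. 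By the genericity consequence of \cite{Hazama94} recalled above, for $(A,\T)$ general every such sub-Hodge structure is contained in $\bH_\bQ$, which gives the first inclusion.

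For the reverse inclusion I would first reduce to a nonvanishing statement. The whole construction --- the Prym fiber $\cP_4^{-1}(A)$, the family $\cF$, the tautological family $\cC$, and the maps $q,r$ --- can be carried out over a suitable cover of an open subset of $\cA_4$, so that $q_!r^*$ becomes a morphism of variations of Hodge structure and $\im(q_!r^*)$ is a sub-variation, in particular monodromy invariant. Now $\bH_\bQ=\bK_\bQ\oplus\theta\cup H^1(A,\bQ)$ splits into two local systems of ranks $10$ and $8$: the second is isomorphic, up to Tate twist, to $H^1(A,\bQ)$ and is monodromy irreducible, and the first, being isomorphic to the primitive cohomology of the cubic threefold $T$ attached to $(A,\T)$, one checks is irreducible as well. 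As the two summands are non-isomorphic, the only monodromy invariant subspaces of $\bH_\bQ$ are $0$, $\bK_\bQ$, $\theta\cup H^1(A,\bQ)$ and $\bH_\bQ$, so it suffices to prove that $\im(q_!r^*)$ meets each summand nontrivially.

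To locate these pieces I would decompose $H^5(\cF,\bQ)$ by the Leray spectral sequence of the projection $\pi:\cF\to\cP_4^{-1}(A)$, whose base is a surface and whose fibers are the curves $\lambda(\tX)$. Only the terms $H^4(\cP_4^{-1}(A),R^1\pi_*\bQ)$ and $H^3(\cP_4^{-1}(A),\bQ)$ survive in total degree $5$, and by Deligne degeneration
\[
H^5(\cF,\bQ)\cong H^4(\cP_4^{-1}(A),R^1\pi_*\bQ)\oplus H^3(\cP_4^{-1}(A),\bQ).
\]
The local system $R^1\pi_*\bQ$ has fiber $H^1(\lambda(\tX))$, and since the double cover $\lambda(\tX)\to\lambda(X)$ has Prym variety $(A,\T)$, the subspace $H^1(A)\subseteq H^1(\lambda(\tX))$ is constant in the family; thus the first summand carries a copy of $H^1(A)$, which $q_!r^*$ sends to $\theta\cup H^1(A,\bQ)$ by the Prym description, and a rank count shows this contribution is nonzero. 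The second summand contains the $\lambda$-invariant part $H^3(F,\bQ)$, where $F=\cP_4^{-1}(A)/\lambda$ is the Fano surface of lines on $T$; by Poincar\'e duality on $F$ and the Clemens--Griffiths isomorphism $H^1(F,\bQ)\cong H^3(T,\bQ)$ one has $H^3(F,\bQ)\cong H^3(T,\bQ)^{\vee}\cong\bK_\bQ$, so this is where the $\bK$-part must come from.

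The main obstacle is showing that $q_!r^*$ is nonzero on this second summand. Concretely one must track, through the $n$-gonal and Prym constructions and the explicit geometry of \cite{I3} and \cite{donagi92}, how the incidence of lines on $T$ is reflected in the family of Prym-embedded curves, and verify that the Abel-Jacobi map of the family restricted to $H^3(F)$ agrees, up to an isomorphism, with the cylinder homomorphism of the universal line over $F$ --- which is an isomorphism onto $H^3(T,\bQ)$. Granting this nonvanishing, $\im(q_!r^*)$ meets both irreducible summands, hence by the reduction of the second step contains both, and therefore equals $\bH_\bQ$.
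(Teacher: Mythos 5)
The paper does not actually prove this theorem: it is quoted from \cite{I4}, with only the added remark that \cite{I4} even shows, for every smooth $\T$, that the image of $\cF$ in the intermediate Jacobian of $\T$ generates the abelian subvariety associated to $\bH$. So you are being measured against the cited argument and against internal consistency. Your first inclusion is correct and is precisely the intended use of the genericity hypothesis: $H^5(\cF,\bQ)$ has level $1$, $r^*$ preserves bidegree and $q_!$ shifts it by $(-1,-1)$, so $\im(q_!r^*)$ has coniveau $\geq 1$, and the consequence of \cite{Hazama94} recalled in Section 1 places any such rational sub-Hodge structure inside $\bH_\bQ$ for $(A,\T)$ outside countably many proper closed subsets.

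The reverse inclusion, however, is not proved, for two reasons. First, your monodromy reduction rests on the irreducibility of $\bK_\bQ$, which you justify by asserting $\bK_\bQ\cong H^3(T,\bQ)$ for the cubic threefold $T$ attached to $(A,\T)$. But that identification is essentially the main theorem of \cite{I4} --- the very content this Abel-Jacobi computation is meant to establish; the survey's Open Problems section explicitly says irreducibility of $\bK$ in dimension $4$ \emph{follows from} \cite{I4}. Invoked as an input, it is circular; you would need an independent argument (for instance a direct computation of the monodromy representation on $\bK_\bQ$ over the relevant open subset of $\cA_4$, or a degeneration to Jacobians). Second, and more seriously, the nonvanishing of $q_!r^*$ on the summand that should produce $\bK_\bQ$ --- the comparison of the Abel-Jacobi map restricted to $H^3(F,\bQ)$ with the cylinder homomorphism of the universal line over the Fano surface $F$ --- is exactly the geometric heart of \cite{I4}, and you write ``granting this nonvanishing''. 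A proof that grants its hardest step is a strategy outline, not a proof. Two smaller points: Deligne degeneration of the Leray spectral sequence for $\pi:\cF\ra\cP_4^{-1}(A)$ requires $\pi$ smooth (otherwise one needs the decomposition theorem, with possible extra summands), while the fibers $\lambda(\tX)$ can degenerate over special points of $\cP_4^{-1}(A)$ even for general $A$; and the claim that ``a rank count'' gives nonvanishing on the $\theta\cup H^1(A,\bQ)$ piece is likewise asserted rather than computed --- ranks alone cannot rule out a zero map, so this too needs an actual evaluation of the Abel-Jacobi map on the constant subsystem $H^1(A)\subset R^1\pi_*\bQ$.
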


It is in fact proved in \cite{I4} that for any $(A,\T)$ with $\T$ smooth, the image of $\cF$ in the intermediate Jacobian of $\T$ generates the abelian subvariety associated to $\bH$.

\section{The case $g=5$}\label{secA5}

The spaces $\cA_5$ and $\cR_6$ both have dimension $15$ and $\cP_5$ is surjective. So $\cP_5$ is generically finite and its degree was computed in \cite{donagismith81} to be $27$.

In \cite{I17}, we use the $5$-gonal construction to construct a family of curves in $\T$ as follows. 

Let $X$ be a smooth curve of genus $6$ with an \'etale double cover
$\tX$ of genus 11. For a pencil $M$ of degree $5$ on $X$ consider the curve $B_M$
defined by the pull-back diagram
\[
\begin{array}{ccc}
B_M& \subset & \tX^{ (5) } \\
\downarrow & &\downarrow \\
\bP^1 = |M| & \subset & X^{ (5) }.
\end{array}
\]
By \cite[p. 360]{beauville82} the curve $B_M$ has two isomorphic connected
components, say $B_{M}^1$ and $B_{M}^2$.
Put $M ' = | K_X - M |$. Then,
for any $D\in B_M\subset\tX^{ (5) }$ and any $D'\in B_{M'}\subset\tX^{ (5)
}$, the push-forward to $X$ of $D+D'$ is a canonical divisor on
$X$. Hence the image of
\[\begin{array}{ccc}
B_M\times B_{M'} & \lra & \Pic^{ 10 }\tX \\
(D,D') &\longmapsto &\cO_{\tX } (D+D' )
\end{array}
\]
is contained in $A^+\cup A^-$. If we have labeled the connected
components of $B_M$ and $B_{M'}$ in such a way that $B_{M}^1\times B_{M'}^1$ maps
into $A^+$, then $B_{M}^2\times B_{M'}^2$ also maps into $A^+$ while $B_{M}^1\times
B_{M'}^2$ and $B_{M}^2\times B_{M'}^1 $ map into $A^-$. By construction, the images of $B_{M}^1\times B_{M'}^1$ and $B_{M}^2\times B_{M'}^2$ lie in $\T^+$.

To obtain a family of curves in $\T^+$, we globalize the above construction.

The scheme $G^1_5 (X)$ parametrizing linear systems of degree
$5$ and dimension at least $1$ on $X$ has a determinantal structure
which is a smooth surface for $X$ sufficiently general (see, e.g., \cite[Chapter V]{ACGH}). 
The universal family $P_5^1$ of divisors of the elements of $G^1_5$ is a
$\bP^1$ bundle over $G^1_5$ with natural maps
\[
\begin{array}{ccc}
P^1_5 & \lra & X^{ (5) } \\
\downarrow & & \\
G^1_5 & & \\

\end{array}
\]
whose pull-back via $\tX\ra X$ gives us the family of the
curves $B_M$ as $M$ varies:
\[
\begin{array}{ccc}
B & \lra & \tX^{ (5) } \\
\downarrow & &\downarrow \\
P^1_5 & \lra & X^{ (5) } \\
\downarrow & & \\
G^1_5 . & & 
\end{array}
\]
The parameter space of the connected components of the curves $B_M$ is an \'etale double cover $\widetilde{G}^1_5$ of
$G^1_5$. If we make a base change,
\[
\xymatrix{B_1\cup B_2\ar[r]\ar[d]&B\ar[d]\\
\widetilde{G}^1_5\ar[r]&{G}^1_5 \ \ ,}
\]
 the family of curves $B$ splits into to components $B_1$ and $B_2$, where $B_1\stackrel{r}\ra\widetilde{G}^1_5$ is the tautological family, i.e, the fiber of $B_1$ over a point $(|M|, B_M^i)\in\widetilde{G}^1_5$ is exactly the curve $B_M^i$.

The family of curves $\cF$ is then defined to be the fiber product
\[
\xymatrix{\cF\ar[r]\ar[d]&B_1\ar[d]^-{\iota\circ r}\\
B_1\ar[r]^-{r}&\widetilde{G}^1_5,}
\]
where $\iota$ is the involution on $\widetilde{G}^1_5$ sending $(|M|,B^i_M)$ to $(|M'|,B^i_{M'})$.

 For $X$ sufficiently general, we obtain a family $\cF$ of smooth curves of genus $25$ over a smooth threefold $B_1$ in the theta divisor $\T^+$ of $A^+ \cong P(\tX, X)$:
 \[
 \xymatrix{\cF\ar[r]^-{q}\ar[d]^{r}&\T^+\\
 B_1.}
 \]
 
The main result of \cite{I17} is 
 
 \begin{theorem} \label{thmI17} For a general Prym variety $P(\tX,X)$, the image of the Abel-Jacobi map $q_!r^*: H^4(B_1,\bQ)\ra H^4(\T^+,\bQ)$ is equal to $\bH_\bQ$.
 \end{theorem}

Note that $H^4(B_1,\bQ)$ is a level $2$ Hodge structure isomorphic to $H^2(B_1,\bQ)$ under the Lefschetz isomorphism. Combining Theorem \ref{thmI17} with the main result of \cite{Hazama94},  we obtain
\begin{corollary} For $(A, \T)$ in the complement of countably many proper Zariski closed subsets of $\cA_5$, the general Hodge conjecture holds for $\T$.
\end{corollary}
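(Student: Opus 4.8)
The plan is to deduce the corollary from Theorem \ref{thmI17} by showing that the Abel-Jacobi image $\bH_\bQ$ is supported on a divisor in $\T$, and then to invoke \cite{Hazama94} to see that this single support statement handles every relevant sub-Hodge structure. Since $H^4(\T,\bQ)$ has weight $4$ and we are testing GHC$(\T,4,1)$, the sub-Hodge structures $V$ at issue are precisely those of level $\le 2$, equivalently of coniveau $\ge 1$. By the consequence of \cite{Hazama94} recorded above, for $(A,\T)$ outside a countable union of proper Zariski closed subsets of $\cA_5$ every such $V$ is contained in $\bH_\bQ$. It therefore suffices to produce a divisor $Z\subset\T$ with $\bH_\bQ\subset\ker\{H^4(\T,\bQ)\to H^4(\T\setminus Z,\bQ)\}$: the inclusion $V\subset\bH_\bQ$ then yields the same support statement for $V$, which is exactly GHC$(\T,4,1)$. (Through the splitting $\bH_\bQ=\bK_\bQ\oplus\theta\cup H^{g-3}(A,\bQ)$ this is also equivalent to the Hodge conjecture for $\bK_\bQ$, the $\theta\cup H^{g-3}(A,\bQ)$ summand being supported on the intersection of a translate of $\T$ with $\T$.)

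The subtlety is that the total space $\cF$ of the family of Theorem \ref{thmI17} has dimension $4=\dim\T^+$, so its image $q(\cF)$ could a priori be all of $\T^+$; the point is that $\bH_\bQ$ nevertheless arises from a codimension-one subfamily. Concretely, I would fix a projective embedding $B_1\hookrightarrow\bP^N$ with hyperplane class $L$ and use hard Lefschetz on the threefold $B_1$ to write every class in $H^4(B_1,\bQ)$ as $L\cup\beta$ with $\beta\in H^2(B_1,\bQ)$. For a general hyperplane section $S=B_1\cap H$, the class $r^*L$ is that of $\cZ:=r^{-1}(S)=\cF\times_{B_1}S$, a family of curves over the surface $S$ with maps $r'=r|_{\cZ}\colon\cZ\to S$ and $q'=q|_{\cZ}\colon\cZ\to\T^+$. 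Functoriality of the Gysin map and the projection formula then give
\[
q_!r^*(L\cup\beta)=q_!\big(r^*L\cup r^*\beta\big)=q'_!(r')^*(\beta|_S),
\]
so that $\bH_\bQ=q_!r^*H^4(B_1,\bQ)$ is contained in $q'_!(r')^*H^2(S,\bQ)$ and is realized by a family of curves over a base of dimension $g-3=2$, exactly as in Conjecture \ref{hodge3}.

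Since $\dim\cZ=\dim S+1=3<4=\dim\T^+$, the image $q'(\cZ)$ is a proper closed subset of $\T^+$; let $Z$ be a divisor containing it. Every class $q'_!(r')^*\gamma$ is supported on $q'(\cZ)\subset Z$, hence lies in $\ker\{H^4(\T^+,\bQ)\to H^4(\T^+\setminus Z,\bQ)\}$, and therefore so does all of $\bH_\bQ$. Transporting along the isomorphism $\T\cong\T^+$ and combining with the first paragraph yields GHC$(\T,4,1)$.

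The main obstacle I expect is the passage from the four-dimensional total space to the codimension-one subfamily while preserving the Abel-Jacobi image. One must check that the displayed Gysin identity is valid as a morphism of (rational) Hodge structures, i.e. that $q_!$, $r^*$, cup product with $L$, and restriction to $S$ are all compatible, and that a \emph{general} $S$ is smooth with $\cZ$ a sufficiently well-behaved (flat, with the expected generic fiber) family so that $q'_!(r')^*$ is defined and has image equal to $\bH_\bQ$ rather than something strictly smaller. A secondary, more routine point is the bookkeeping of exceptional loci: one intersects the co-countable set on which Theorem \ref{thmI17} applies with the co-countable set of \cite{Hazama94}, using the surjectivity of $\cP_5$ to see that a general $(A,\T)\in\cA_5$ is a general Prym variety, so that both conditions hold simultaneously off a countable union of proper Zariski closed subsets.
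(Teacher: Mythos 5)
Your proposal is correct and takes essentially the same route as the paper, which deduces the corollary by exactly this combination: Hazama's theorem placing every coniveau-$\ge 1$ rational sub-Hodge structure of $H^4(\T,\bQ)$ inside $\bH_\bQ$ for generic $(A,\T)$, together with Theorem \ref{thmI17} realizing $\bH_\bQ$ as an Abel--Jacobi image. The hard-Lefschetz/hyperplane-section reduction you spell out to get $\bH_\bQ$ supported on a divisor is precisely what the paper leaves implicit in its remark that $H^4(B_1,\bQ)$ is isomorphic to $H^2(B_1,\bQ)$ under the Lefschetz isomorphism (matching the formulation of Conjecture \ref{hodge3} with a base of dimension $g-3=2$).
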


As far as we are aware, the primitive cohomology of the theta divisor of an abelian fivefold is the first nontrivial case of a proof of the Hodge conjecture for a family of fourfolds of general type. The proof was considerably more difficult than the case of of the theta divisor of the abelian fourfold worked out in \cite{I4} and required a difficult degeneration to the case of a Jacobian. The computation of the Abel-Jacobi map was broken into computations on different graded pieces of the limit mixed Hodge structures of $\cF$ and $\T$, see \cite{I17} for the full details.

\section{Higher dimensional cases}

As is often the case with deep conjectures such as the Hodge conjecture, the level of difficulty goes up exponentially with the dimension of the varieties concerned or, perhaps more accurately, with their Kodaira dimension.

In higher dimensions a general principally polarized abelian variety is no longer a Prym variety. It is however, a Prym-Tyurin variety \cite{welters871}. This is, up to now, the most promising generalization of Prym varieties. For any curve $C$ generating the abelian variety $A$ as a group, pull-back on the first cohomology induces a map $A \ra JC$ which has finite kernel. Assume given a curve $C$ and a symmetric correspondence $D\subset C\times C$. Denote $\T_C$ a Riemann theta divisor on $JC$, i.e., a translate of the variety $W_{g-1} \subset Pic^{g-1} C$ of effective divisor classes. Also denote by $D$ the endomorphism $JC \ra JC$ induced by $D$. We have the following

\begin{definition}
We say that $(A, \T)$ is a Prym-Tyurin variety for $(C, D)$ if there exists a positive integer $m$ such that $D$ satisfies the equation
\[
(D-1)(D+m-1) = 0
\]
and there is an isomorphism $A\cong \im (D-1)$ inducing an algebraic equivalence $\T \equiv m \T_C |_{\im (D-1)}$.

The integer $m$ is called the {\em index} of the Prym-Tyurin variety.
\end{definition}

To find a family of curves in $\T$ that would give an answer to the Hodge conjecture for $\bH_{\bQ}$ or $\bK_{\bQ}$ (as in the cases $g=4, 5$), we need an explicit Prym-Tyurin structure on $(A, \T)$. In particular, we need to know at least one value of the index $m$. In general, there is very little known about the indices of ppav. In dimension $6$ however, we have the following (see \cite{izadiA6}).

\begin{theorem}
For $(A,\T)$ general of dimension $6$, there is a Prym-Tyurin structure $(C,D)$ of index $6$ on $(A,\T)$.

Furthermore, there is a morphism $\pi : C\ra \bP^1$ of degree $27$ such that the Galois group of the associated Galois cover $X \ra \bP^1$ is the Weyl group $W(E_6)$. The morphism $\pi$ has $24$ branch points and above each branch point there are $6$ simple ramification points in $C$. If $P\in \bP^1$ is not a branch point of $\pi$, the action of $W(E_6)$ on $\pi^{-1} (P)$ gives an identification of $\pi^{-1} (P)$ with the set of lines on a smooth cubic surface such that the restriction of the correspondence $D$ to $\pi^{-1} (P) \times \pi^{-1} (P)$ can be identified with the incidence correspondence of lines on a smooth cubic surface.
\end{theorem}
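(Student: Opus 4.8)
The plan is to realize $(A,\T)$ through the Prym--Tyurin--Kanev construction attached to the Weyl group $W(E_6)$ and its minuscule $27$-dimensional representation, whose weights are the $27$ lines on a cubic surface. First I would fix a Galois cover $X\ra\bP^1$ with group $W(E_6)$, branched over $n$ points, with local monodromy at each branch point equal to a reflection; such covers exist because $W(E_6)$ is generated by reflections, and by standard Hurwitz theory the covers with full monodromy $W(E_6)$ form a space of dimension $n-3$. Letting $W(D_5)\subset W(E_6)$ be the stabilizer of a line --- an index $27$ subgroup since $|W(E_6)|/|W(D_5)|=51840/1920=27$ --- I set $C:=X/W(D_5)$, so that $\pi:C\ra\bP^1$ has degree $27$ and the fibre over a general point $P$ is canonically the set of $27$ lines on the associated cubic surface. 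The incidence (meeting) correspondence of lines is $W(E_6)$-invariant, hence descends to a symmetric correspondence $D\subset C\times C$, and this gives the pair $(C,D)$ together with the promised identification of $(\pi^{-1}(P),D)$ with the lines on a cubic surface and their incidence.

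Next I would verify the Prym--Tyurin equation and pin down the numerics. On a single fibre the incidence correspondence is the adjacency operator of the Schl\"afli graph, a strongly regular graph with parameters $(27,10,1,5)$; its eigenvalues are $10,1,-5$ with multiplicities $1,20,6$, reflecting the decomposition $\bQ^{27}=\mathbf 1\oplus V_{20}\oplus V_6$ of the permutation module into irreducible $W(E_6)$-representations, with $V_6$ the reflection representation. The eigenvalue $10$ occurs on the constants, which come from $\bP^1$ and so contribute nothing to $JC$; therefore on $JC$ the endomorphism induced by $D$ has eigenvalues only $1$ and $-5$, i.e. $(D-1)(D+5)=0$. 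This is exactly the index $m=6$ equation (read off from $-5=-(m-1)$), and $A:=\im(D-1)$ is the abelian subvariety on which $D$ acts as $-5$, namely the $V_6$-isotypic part of $JC$. Its dimension is computed by a parabolic cohomology count: since $V_6$ appears with multiplicity one in $\bQ^{27}$ and each local monodromy is a reflection acting on $V_6$ with a $5$-dimensional fixed space, one finds $\dim_\bQ H^1(\bP^1,j_*\mathcal V_6)=6(n-2)-5n=n-12$, so that $\dim A=(n-12)/2$. Requiring $\dim A=6$ forces $n=24$, and the Hurwitz space then has dimension $24-3=21=\dim\cA_6$. The same reflection, which swaps $6$ pairs of lines and fixes the remaining $15$, produces exactly $6$ simple ramification points of $\pi$ over each branch point, as asserted; Riemann--Hurwitz then gives $g_C=46$. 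Finally, once $D-1$ is recognized as the norm endomorphism on $A$, the relation $(D-1)(D+5)=0$ yields by the standard Welters--Kanev computation on $C\times C$ that $\T_C|_A$ is $6$ times a principal polarization, establishing the Prym--Tyurin structure of index $6$.

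The heart of the matter --- and the step I expect to be the main obstacle --- is to show that this construction produces a \emph{general} ppav, i.e. that the Prym--Tyurin--Kanev map
\[
\mathcal{PT}:\{W(E_6)\text{-covers branched at }24\text{ points}\}\lra\cA_6
\]
is dominant, so that a general $A\in\cA_6$ does arise with such a structure. Since source and target both have dimension $21$, it suffices to prove that $d\,\mathcal{PT}$ is an isomorphism at one cover, which then also gives generic finiteness. I would identify the codifferential with a cup/multiplication map: the cotangent space to $\cA_6$ at $A$ is $\mathrm{Sym}^2 H^0(A,\Omega^1_A)$, the cotangent space to the Hurwitz base is $H^0(\bP^1,\Omega_{\bP^1}^{\otimes 2}(\textstyle\sum_i p_i))$, the space of quadratic differentials with at most simple poles at the $24$ branch points (of dimension $24-3=21$), and $d\,\mathcal{PT}^\vee$ is given by multiplying the corresponding sections of $\omega_C$ in the $V_6$-isotypic part and pushing down to $\bP^1$.

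Proving surjectivity of this map is an infinitesimal Torelli-type statement for the family and is the technical crux. In practice I would establish it either by a direct representation-theoretic analysis of the $V_6$-isotypic multiplication map, or by degenerating the cover to a boundary point where $A$ specializes to a computable ppav (for instance a Jacobian or a decomposable variety) and checking the rank of $d\,\mathcal{PT}$ there; combined with the equality of dimensions, this rank computation yields the dominance and hence the theorem.
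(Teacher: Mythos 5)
Your construction and all of its numerical verifications are correct, and they match the approach of the work this survey actually relies on: the paper states this theorem with no proof at all, citing only \cite{izadiA6} (in preparation), so the comparison here is with that reference rather than with an argument in the text. Your checks are right: the incidence graph of the $27$ lines is strongly regular with spectrum $10,1,-5$ of multiplicities $1,20,6$, matching $\bQ^{27}=\mathbf 1\oplus V_{20}\oplus V_6$; hence $(D-1)(D+5)=0$ on $JC$, which is Kanev's equation with $m=6$; a reflection acts on the $27$ lines with cycle type $1^{15}2^{6}$, giving the $6$ simple ramification points over each branch point; the parabolic count $h^1=(n-2)\cdot 6-5n=n-12$ forces $n=24$, $\dim A=6$, $g_C=46$, and $24-3=21=\dim\cA_6$. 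Two points you use silently deserve a line each: Kanev's criterion requires the correspondence to be disjoint from the diagonal of $C\times C$, including at ramification points, and this holds because the $6$ pairs of lines swapped by a reflection are mutually \emph{non}-incident (e.g.\ $c_{ik}$ and $c_{jk}$ share an index, hence do not meet); and to pass from ``$d\,\mathcal{PT}$ an isomorphism at one point'' to the statement for \emph{general} $(A,\T)$ one wants smoothness and irreducibility of the Hurwitz space, the latter being exactly Kanev's irreducibility theorem quoted in this paper.

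The genuine gap is the step you yourself flag as the crux: dominance of $\mathcal{PT}$. There your text is a plan, not a proof --- ``I would establish it either by a direct representation-theoretic analysis \dots or by degenerating \dots and checking the rank'' --- and this is precisely where the entire difficulty of the theorem resides; everything before it is classical Kanev theory plus arithmetic of $W(E_6)$. Executing the degeneration strategy requires compactifying the Hurwitz space by admissible covers, showing the Prym--Tyurin map extends over the relevant boundary divisors towards a suitable compactification of $\cA_6$, identifying the limit semiabelian varieties (whose $5$-dimensional abelian parts are tied to the degree-$27$ Prym map $\cP_5:\cR_6\ra\cA_5$ with monodromy group $W(E_6)$, the very structure highlighted in the paper's open problems), and then computing the differential, or the finiteness of a fiber, at such a boundary point; this is the substance of \cite{izadiA6} and none of it is carried out in your proposal. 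Likewise your identification of the codifferential with a multiplication map landing in $H^0\bigl(\bP^1,\omega_{\bP^1}^{\otimes 2}(\sum_i p_i)\bigr)$ is plausible but is itself an assertion needing proof. In short: the skeleton and all verifiable numerics are correct and agree with the cited approach, but the proposal is incomplete exactly at the step that makes the theorem a theorem.
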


Prym-Tyurin structures for correspondences obtained from covers with monodromy group Weyl groups of Lie algebras were constructed by Kanev \cite{kanev95} (also see \cite{langerojas08}) who also proved irreducibility results for some of the Hurwitz schemes parametrizing such covers \cite{kanev06}. In particular, Kanev proved that the Hurwitz scheme parametrizing covers as in the above theorem is irreducible.

\section{Open problems}

\begin{description}
\item[Irreducibility]
It would be interesting to know whether the Hodge structure $\bK$ is irreducible. This is trivially true in dimensions up to $3$ and follows from the results of \cite{I4} in dimension $4$. In dimension $5$ this would simplify the computation of the Abel-Jacobi map hence shorten the proof of \cite{I17}.

\item[$E_6$ structure when $g=5$]
The monodromy group of the Prym map $\cR_6 \ra \cA_5$ is the Weyl group $W(E_6)$ of the exceptional Lie algebra $E_6$ (see \cite[Theorem 4.2]{donagi92}). Also, the lattice $\bK$ has rank $78$ for $g=5$ which is equal to the dimension of $E_6$. So one might wonder whether it is possible to define a natural isomorphism between $\bK_{\bC} := \bK\otimes \bC$ and $E_6$.

\item[Generalization of the $n$-gonal construction] As we saw the $5$-gonal (or pentagonal) construction is used in the construction of the family of curves in dimension $5$ and the $4$-gonal (or tetragonal) construction is important for understanding the family of curves in dimension $4$. Therefore, one can ask whether there is a good generalization of the $n$-gonal construction for correspondences (in analogy with double covers) that would allow one to construct a good family of curves in higher dimensions.

\item[Catalan numbers]
The $g$-th Catalan number can be directly defined as
\[
C_g := \frac{1}{g+1}{2g \choose g}
\]
and is the solution to many different counting problems (see, e.g., \cite{koshy08}). For instance, $C_g$ is the number of permutations of $g$ letters that avoid the pattern $1,2,3$. This means that, if we represent a permutation $\sigma$ by the sequence $s(\sigma) := (\sigma(1), \sigma (2) , \ldots, \sigma(g))$, then the sequence $s(\sigma)$ does not contain any strictly increasing subsequence of length $3$. Or, $g! - C_g = \dim (\bK_{\bQ})$ is the number of permutations of $g$ letters that contain the pattern $1,2,3$ (i.e., $s(\sigma)$ does contain a strictly increasing subsequence of length $3$). An interesting question would be to find degenerations of $\T$, i.e., $\bK$, that illustrate some of these counting problems. For instance, a degeneration of $\T$ and $\bK$ that would exhibit a natural basis of $\bK$ indexed by the permutations of $g$ letters that contain the pattern $1,2,3$.

\end{description}


\newcommand{\etalchar}[1]{$^{#1}$}

\end{document}